\setlist[enumerate]{label={\upshape(\roman*)}}
\newcommand{\norm}[1]{\ensuremath{\left\lVert #1 \right\rVert }}
\newcommand{\abs}[1]{\ensuremath{\left\lvert #1 \right\rvert}}
\mathchardef\expandafter\varphi\number\expandafter\phi\expandafter\relax
\mathchardef\expandafter\phi\number\varphi
\newtheorem{theorem}{Theorem}
\newtheorem{lemma}[theorem]{Lemma}
\theoremstyle{definition}
\newtheorem{remark}[theorem]{Remark}
\newtheorem{definition}[theorem]{Definition}
\newtheorem{question}[theorem]{Question}
\DeclareMathOperator{\diag}{diag}
\DeclareMathOperator{\trace}{tr}
\newcommand{\Hil}{\ensuremath{\mathcal{H}}}
\renewcommand{\epsilon}{\varepsilon}
\begin{document}

\title{Traces on ideals and the commutator property}
\author{Jireh Loreaux}
\email{jloreau@siue.edu}
\address{Southern Illinois University Edwardsville \\
  Department of Mathematics and Statistics \\
  Edwardsville, IL, 62026-1653 \\
  USA}
\author{Gary Weiss}
\email{gary.weiss@uc.edu}
\address{University of Cincinnati   \\
  Department of Mathematics  \\
  Cincinnati, OH, 45221-0025 \\
  USA}

\begin{abstract}
  We propose a new class of traces motivated by a trace/trace class property discovered by Laurie, Nordgren, Radjavi and Rosenthal concerning products of operators outside the trace class.
  Spectral traces, traces that depend only on the spectrum and algebraic multiplicities, possess this property and we suspect others do, but we know of no other traces that do.

  This paper is intended to be part survey.
  We provide here a brief overview of some facts concerning traces on ideals, especially involving Lidskii formulas and spectral traces.

  We pose the central question: whenever the relevant products, $AB,BA$ lie in an ideal, do bounded operators $A,B$ always commute under any trace on that ideal, i.e., $\tau (AB) = \tau (BA)$? And if not, characterize which traces/ideals do possess this property.
\end{abstract}

\maketitle

Our motivation for this paper came from an evolution of trace results culminating with a complete, simple and elegant generalization to $B(H)$ by Laurie, Nordgren, Radjavi and Rosenthal of a trace result on the trace class.
We wondered what other traces/ideals possessed this property.
We show the well-studied spectral traces possess this property using Lidskii's theorem, and investigate whether or not there are others.
As a survey we provide proofs of some well-known facts we use.

Given a proper (two-sided) ideal $\mathcal{J}$ of the bounded operators $B(\Hil)$, a \emph{trace} on $\mathcal{J}$ is a unitarily invariant linear functional $\tau : \mathcal{J} \to \mathbb{C}$, i.e., $\tau(UAU^{*}) = \tau(AUAU^{*}) = \tau(A)$ for any unitary $U$ and $A \in \mathcal{J}$.
From the commutator relation $UAU^*-A= [UA,U^*]$, since any bounded operator is a linear combination of four unitary operators (see next paragraph), it follows easily that the unitary invariance of $\tau$ on $\mathcal{J}$ is equivalent to its vanishing on the commutator space $[\mathcal{J},B(\Hil)]$, the span of commutators where one of the operators in each commutator lies in $\mathcal{J}$.
Of course, the standard trace is an example of a trace on the trace class ideal (or any $B(H)$-ideal inside it), with its unitary invariance arising from the fact that the sum of the diagonal entries is independent of the choice of orthonormal basis.
Traces on ideals can have a plethora of other properties, including \emph{positivity} ($\tau : \mathcal{J}_+ \mapsto \mathbb{R}_+$) and \emph{normality} ($A_n \uparrow A$ in SOT implies $\tau(A_n) \to \tau(A)$), of which the standard trace on the trace class has both.

To prove that every operator is a linear combination of four unitaries, it suffices to show that every selfadjoint operator of norm less than or equal to one is the sum of a unitary operator and its adjoint.
This follows readily from the continuous function calculus.
Suppose $A = A^{*}$ with $\norm{A} \le 1$.
Then let $U := A + i\sqrt{I-A^2}$.
It is trivial to check that $U+U^{*} = A$ and $U^{*}U = UU^{*} = I$.

The threefold purpose of this paper is to pose the question: for which traces/ideals do all $B(\Hil)$ operators commute under only the condition that their two products $AB,BA$ both lie in the ideal; to provide evidence in support of it; and to point the reader where to look for possible counterexamples.

\begin{question}
  \label{que:trace-question}
  Given a trace $\tau : \mathcal{J} \to \mathbb{C}$ and $A,B \in B(\Hil)$ does
  \begin{equation*}
    \tau(AB) = \tau(BA) \quad\text{whenever}\ AB, BA \in \mathcal{J}?
  \end{equation*}
\end{question}

We now describe our motivation for this question.
Of course, the product of two Hilbert--Schmidt operators is trace-class (by the Cauchy--Schwarz inequality), and it is a well-known fact that whenever $A,B$ are Hilbert--Schmidt, then $\trace(AB) = \trace(BA)$; a direct application of Fubini's theorem.
Perhaps less well-known is that this extends to the case when $A \in \mathcal{K}$ and $B \in B(\Hil)$ and both products are trace-class.
This appears, for instance, in Gohberg--Krein \cite[Theorem~III.8.2]{GK-1969-ITTTOLNO} using the Schmidt decomposition for compact operators, or in Dunford--Schwartz \cite[Lemma~XI.9.14(b)]{DS-1963}.
However, we can obtain an affirmative answer to the stronger \Cref{que:trace-question} for the standard trace using a theorem of Lidskii \cite{Lid-1965-TS2AMS} (see \Cref{thm:lidskii}) and \autoref{lem:weyl-eigenvalue-lemma}.
Then, because Lidskii's theorem requires advanced techniques, we give the Laurie--Nordgren--Radjavi--Rosenthal \cite[Lemma~2.1]{LNRR-1981-JRAM} much simpler but powerful proof for the standard trace on the trace-class in the hope it contains ideas that are amenable to some level of generalization like \Cref{que:trace-question}.

\subsection*{Spectral traces and Lidskii's theorem}

For a compact operator $A$ let $\lambda(A)$ denote its eigenvalue sequence listed in order of nonincreasing modulus and repeated according to algebraic multiplicity.
The algebraic multiplicity of an eigenvalue $\lambda$ of $A$ is the dimension of the generalized eigenspace, which is defined as all vectors $v$ for which there is some $n \in\mathbb{N}$ so that $(\lambda-A)^n v = 0$.
Note, if $A$ has infinitely many nonzero eigenvalues, zero is omitted from the sequence $\lambda(A)$ since otherwise the important monotonizability of the moduli is impossible;
and in general, the sequence order may be nonunique because there is no dictated order for those eigenvalues with the same modulus.

A standard fact due to Weyl \cite{Wey-1949-PNAS} guarantees that $\lambda(A)$ is absolutely summable whenever $A$ is trace-class.
Moreover,

\begin{theorem}[Lidskii]
  \label{thm:lidskii}
  If $A \in \mathcal{K}$ is trace-class, then
  \begin{equation*}
    \trace A = \sum_{n=1}^{\infty} \lambda_n(A).
  \end{equation*}
\end{theorem}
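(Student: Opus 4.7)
The plan is to produce an orthonormal basis of $\Hil$ in which the matrix of $A$ is upper triangular with diagonal entries enumerating the sequence $\lambda(A)$, possibly supplemented by zeros. Since $A$ is trace class, the formula $\trace A = \sum_n \angles{A e_n, e_n}$ holds for any orthonormal basis $\{e_n\}$, and the sum is absolutely convergent with $\sum_n \abs{\angles{A e_n, e_n}} \le \norm{A}_1$ (provable via the Schmidt decomposition and Cauchy--Schwarz); absolute convergence then permits rearranging and discarding the zero diagonal entries to recover $\sum_n \lambda_n(A)$.

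The ``eigenvalue part'' of the basis is produced via Riesz--Schauder theory and a Gram--Schmidt argument on a nest of $A$-invariant subspaces. Enumerate the distinct nonzero eigenvalues $\mu_1, \mu_2, \ldots$ of $A$ in nonincreasing order of modulus, and let $G_k = \ker (A-\mu_k)^{m_k}$ be the corresponding generalized eigenspace: finite-dimensional, $A$-invariant, of dimension $m_k$ equal to the algebraic multiplicity. Within each $G_k$ choose a Jordan flag $\{0\} \subset J_{k,1} \subset \cdots \subset J_{k,m_k} = G_k$ of $A$-invariant subspaces of successive dimensions, then chain across $k$ to form a nest $\{0\} = M_0 \subset M_1 \subset \cdots$ of finite-dimensional $A$-invariant subspaces with $\dim M_n = n$. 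Pick a unit vector $e_n \in M_n \cap M_{n-1}^{\perp}$ at each step. Since $Ae_n \in M_n$ and $A$ acts on the one-dimensional quotient $M_n/M_{n-1}$ by multiplication by the appropriate $\mu_k$, one obtains $\angles{Ae_n, e_n} = \mu_k$, so these diagonal entries enumerate $\lambda(A)$ exactly.

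To complete the basis, set $E := \cl{\spans\{e_n : n \ge 1\}}$, a closed $A$-invariant subspace, and study the compression $B := P_{E^{\perp}} A P_{E^{\perp}}|_{E^{\perp}}$. Extending $\{e_n\}$ by any orthonormal basis of $E^{\perp}$ yields $\trace A = \sum_n \lambda_n(A) + \trace B$, so it remains to show $\trace B = 0$. The operator $B$ is compact and trace class with $\norm{B}_1 \le \norm{A}_1$, and a short spectral argument using the Fredholm alternative shows its nonzero spectrum is empty: any eigenvector of $B$ for a nonzero $\lambda \notin \{\mu_k\}$ would, via the invertibility of $(A-\lambda)|_E$, lie in $E$, contradicting $v \in E^{\perp}$; and $\lambda = \mu_k$ would force $v$ into the captured generalized eigenspace $G_k \subset E$, again a contradiction. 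Hence $B$ is quasinilpotent.

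The main obstacle is the final step: showing the trace vanishes on a quasinilpotent trace-class operator. The standard route is Ringrose's upper triangularization theorem, which constructs a maximal (complete) chain of $B$-invariant subspaces by combining the Aronszajn--Smith invariant subspace theorem for compact operators with a Zorn's lemma maximality argument; on each successive one-dimensional quotient $B$ acts by its only eigenvalue, zero. Gram--Schmidt along this chain yields an orthonormal basis of $E^{\perp}$ with respect to which $B$ has zero diagonal, so $\trace B = 0$ and the formula follows.
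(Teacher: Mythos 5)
The paper offers no proof of this theorem---it is quoted from Lidskii's paper precisely because, as the authors say, it ``requires advanced techniques''---so your proposal must stand on its own. Its first half is the standard opening of most proofs of Lidskii's theorem and is essentially sound: triangularizing $A$ along a nest built from Jordan flags inside the generalized eigenspaces, splitting $\Hil = E \oplus E^{\perp}$ with $E$ the closed span of the resulting orthonormal vectors, and reducing to showing $\trace B = 0$ for the compression $B$ on $E^{\perp}$. One step there is glossed, though: in proving $B$ has no nonzero eigenvalues, the case $\lambda = \mu_k$ is not handled by ``forcing $v$ into $G_k$.'' From $Bv = \mu_k v$ you only get $(A - \mu_k)v \in E$; solving the equation on the Riesz complement $R_k$ of $G_k$ yields $v \in G_k + R_k$ with no reason for the $R_k$-component to lie in $E$, so no contradiction appears. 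The correct (and standard) fix is a multiplicity count: the Riesz idempotent of $A$ at $\mu_k$ is block upper triangular relative to $E \oplus E^{\perp}$, so the algebraic multiplicities of $\mu_k$ for $A|_E$ and for $B$ add up to $m_k$; since $G_k \subset E$ already accounts for all of $m_k$, the multiplicity for $B$ is zero. This is fixable, but as written it is not an argument.

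The genuine gap is the final step. Ringrose's theorem does give a maximal nest of $B$-invariant subspaces whose atoms (where they exist) are one-dimensional and on which $B$ acts by $0$, but maximality does \emph{not} make the nest atomic: a maximal chain of invariant subspaces of a quasinilpotent compact operator can be a purely continuous nest with no one-dimensional quotients whatsoever---the canonical example being the Volterra-type situation, where the closed invariant subspaces form the continuous nest $\{L^2[0,t]\}_{t\in[0,1]}$. In that case ``Gram--Schmidt along the chain'' produces no vectors at all, so you do not obtain an orthonormal basis of $E^{\perp}$ in which $B$ has zero diagonal, and $\trace B = 0$ does not follow. In fact, after your reduction, the statement ``every quasinilpotent trace-class operator has trace zero'' is equivalent to Lidskii's theorem itself; it is exactly the hard core, and the known proofs are genuinely nontrivial: either one studies the Fredholm determinant $\det(I + zB)$, using Weyl's inequality (\Cref{lem:weyl-eigenvalue-lemma}) for the requisite bounds and then a Hadamard-factorization/growth argument to kill the exponential factor whose linear coefficient is $\trace B$; or one works with the continuous part of the nest and proves that the block-diagonal expectations converge to $0$ in trace norm while each preserves the trace (a Gohberg--Krein-type argument), which again requires real analytic work. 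As it stands, your proposal assumes precisely the part of the theorem that needs proving.
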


It is well-known that the spectra of $AB$ and $BA$ have the same nonzero elements.
Indeed, if $\lambda \not= 0$ is not in the spectrum of $AB$, then
\begin{equation*}
  (\lambda - BA)^{-1} = \lambda^{-1} (I + B(\lambda-AB)^{-1}A),
\end{equation*}
which is verifiable by direct multiplication.
This next lemma, which is known, extends this fact for compact operators by ensuring that not only are the nonzero elements of the spectra $AB$ and $BA$ identical, they also have the same algebraic multiplicity.
We produce a short proof here for completeness.

\begin{lemma}
  \label{lem:algebraic-multiplicity}
  If $A,B \in B(\Hil)$ with $AB, BA \in \mathcal{K}$, then $\lambda_n(AB) = \lambda_n(BA).$
\end{lemma}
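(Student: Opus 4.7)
The plan is to prove equality of the algebraic multiplicities of every nonzero eigenvalue by exhibiting injective linear maps between the corresponding generalized eigenspaces in both directions; since each such space is finite dimensional (the nonzero spectrum of a compact operator consists of eigenvalues of finite algebraic multiplicity, by the Riesz theory applied to $AB$ and $BA$), this forces their dimensions to agree, which in turn forces the eigenvalue sequences $\lambda(AB)$ and $\lambda(BA)$ to coincide.

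The main tool is the intertwining identity
\begin{equation*}
  B(\lambda - AB) = (\lambda - BA)B,
\end{equation*}
which I would verify by direct expansion and then iterate to obtain $B(\lambda - AB)^{n} = (\lambda - BA)^{n} B$ for all $n \in \mathbb{N}$. The symmetric identity holds with the roles of $A$ and $B$ exchanged. As an immediate consequence, left multiplication by $B$ sends the generalized eigenspace $E_{\lambda}(AB) := \bigcup_{n} \ker((\lambda - AB)^n)$ into $E_{\lambda}(BA)$, and similarly $A$ sends $E_{\lambda}(BA)$ into $E_{\lambda}(AB)$.

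The step requiring a small observation, and the one I would flag as the only substantive point, is to show that these maps are injective when $\lambda \ne 0$. Suppose $v \in E_{\lambda}(AB)$ with $Bv = 0$, and fix $n$ such that $(\lambda - AB)^n v = 0$. Expanding the binomial, every term of $(\lambda - AB)^n$ other than $\lambda^n I$ ends (on the right) with a factor of $AB$, so each such term kills $v$; hence $\lambda^n v = 0$, and since $\lambda \ne 0$ this forces $v = 0$. Thus $B \colon E_{\lambda}(AB) \to E_{\lambda}(BA)$ is injective, and by the symmetric argument so is $A \colon E_{\lambda}(BA) \to E_{\lambda}(AB)$.

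Combining these two injections with the finite dimensionality of $E_{\lambda}(AB)$ and $E_{\lambda}(BA)$ for each nonzero $\lambda$ yields equality of their dimensions, i.e.\ equality of the algebraic multiplicity of $\lambda$ as an eigenvalue of $AB$ and of $BA$. Since this holds for every nonzero $\lambda$, the eigenvalue sequences $\lambda(AB)$ and $\lambda(BA)$ (listed by nonincreasing modulus with multiplicity, and with zero omitted as in the paper's convention) agree termwise, up to the inherent ambiguity of ordering eigenvalues of equal modulus, proving $\lambda_n(AB) = \lambda_n(BA)$.
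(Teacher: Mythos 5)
Your proposal is correct and follows essentially the same route as the paper: the intertwining identity $B(\lambda-AB)^n=(\lambda-BA)^nB$, the binomial-expansion argument showing $B$ (and symmetrically $A$) is injective on the generalized eigenspaces, and finite dimensionality of those spaces to conclude equality of algebraic multiplicities. The only cosmetic difference is that you cite Riesz theory for finite dimensionality of the generalized eigenspaces, whereas the paper includes its own short proof of that fact.
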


\begin{proof}
  We just showed that the nonzero elements of the spectra of $AB,BA$ are the same.
  It remains to prove that they have the same algebraic multiplicity.
  Let $v$ be a generalized eigenvector of $AB$ corresponding to the nonzero eigenvalue $\lambda$.
  That is, there is some $n \in \mathbb{N}$ for which $(\lambda-AB)^n v = 0$.
  Then certainly $Bv$ is nonzero, for otherwise by the binomial theorem $(\lambda-AB)^n v = \lambda v$ which is nonzero.
  This means $B$ is injective on the subspace of generalized eigenvectors of $AB$ for the eigenvalue $\lambda$.

  Observe that $Bv$ is a generalized eigenvector of $BA$ corresponding to $\lambda$.
  For this, notice that $(I-BA)B = B(I-AB)$ and hence by induction $(I-BA)^n B = B(I-AB)^n$.
  Therefore, $(I-BA)^n Bv = B(I-AB)^n v = 0$.
  Hence $B$ is an injective linear map from the finite dimensional subspace (see below) of generalized eigenvectors of $AB$ for $\lambda$ to the subspace of generalized eigenvectors of $BA$ for $\lambda$.
  Since the algebraic multiplicity of $\lambda$ is just the dimension of the generalized eigenspace, we can conclude that the algebraic multiplicity of $\lambda$ for $BA$ is at least as large as that for $AB$.
  Swapping the roles of $A,B$, we find that the algebraic multiplicity of $\lambda$ must be the same for both $AB$ and $BA$.

  Here is a short proof that generalized eigenspaces of a compact operator $K$ are finite dimensional.
  Let $\lambda$ be an eigenvalue.
  Consider the increasing sequence of subspaces $E_n = \{ v \in \Hil | (K-\lambda)^n v = 0 \}$ of generalized eigenvectors of order $n$.
  Notice that $(K-\lambda) E_{n+1} \subset E_n$.
  In particular, $(K-\lambda)$ is block upper triangular (with diagonal blocks zero) relative to the decomposition, $E_1$, $E_2 \ominus E_1$, $E_3 \ominus E_2, \ldots$, and therefore, in any basis for which $K-\lambda$ is in this block form, $K$ has diagonal lambda on the union of the $E_n$.
  Therefore, if $K$ is compact, then the union of the $E_n$ must be finite dimensional.
\end{proof}

Now, using \hyperref[thm:lidskii]{Lidskii's theorem} (\Cref{thm:lidskii}) and \Cref{lem:algebraic-multiplicity}, we see immediately that for the standard trace on the trace-class (or on any ideal inside it) \Cref{que:trace-question} has an affirmative answer.
This actually suggests a general approach to \Cref{que:trace-question}: whenever a trace $\tau$ on an ideal $\mathcal{J}$ depends only on the spectral data of the operator, then \Cref{que:trace-question} holds for $(\mathcal{J},\tau)$.
Such traces are called \emph{spectral} traces, and we will discuss them further shortly.
But first, we would like to present the entirely different proof mentioned earlier that the standard trace on the trace-class satisfies \Cref{que:trace-question}.
It is the primary motivation for this paper.

\subsection*{For trace-class, proof of \Cref{que:trace-question} without Lidskii's theorem}
This proof in \cite[Lemma~2.1]{LNRR-1981-JRAM} relies on four essential ingredients:
\begin{enumerate}
\item \label{item:A-positive} To prove \Cref{que:trace-question} it suffices to consider $A$ positive by considering the polar decomposition of $A$.

  This is because for the polar decomposition $A = U|A|$, one has
  \begin{equation*}
    \trace (AB - BA) = \trace (U|A|B - BU|A|) = \trace (|A|BU - BU|A|).
  \end{equation*}
  Needed here is that $\trace U|A|B = \trace |A|BU$ which holds since $AB$ is trace class and hence so also $U^*AB = |A|B$, the latter of which commutes with $U$ under the trace.
\item \label{item:P_n-commutes-A} If $P_n$ denotes the spectral projection of positive $A$ onto $[\frac{1}{n},\norm{A}]$, $P_n$ commutes with $A$.
\item \label{item:P_n-A-local-invert} $P_n A$ is locally invertible, so if $P_n A P_n B P_n$ is trace-class, then $P_n B P_n$ must be trace-class, so these operators commute under the trace.
\item \label{item:P_n-to-I-sot} $P_n \to I$ in the strong operator topology and the standard trace is strong operator continuous.
\end{enumerate}
Together, this means
\begin{equation*}
  \trace(P_n AB) \underset{\text{\ref{item:P_n-commutes-A}}}= \trace(P_n A P_n B P_n) \underset{\text{\ref{item:P_n-A-local-invert}}}= \trace(P_n B P_n A P_n) \underset{\text{\ref{item:P_n-commutes-A}}}= \trace(BA P_n).
\end{equation*}
By \ref{item:P_n-to-I-sot}, the left-hand side converges to $\trace(AB)$ and the right-hand side converges to $\trace(BA)$, so we have $\trace(AB) = \trace(BA)$.
This proof is beautiful for its simplicity, elegance and that it does not rely on Lidskii's theorem.

If one is interested in generalizing the above proof to other operator ideals and traces $(\mathcal{J}, \tau)$, there is some hope.
Notice that \ref{item:A-positive}--\ref{item:P_n-A-local-invert} above hold with the trace-class ideal replaced by $\mathcal{J}$ and the standard trace replaced by $\tau$.
It is only \ref{item:P_n-to-I-sot} which would cause issues.
Indeed, nonzero singular traces (those which vanish on the finite rank operators) are never strong operator continuous.
However, if one could show that $P_n AB \to AB$ and $BAP_n \to BA$ in some topology on $\mathcal{J}$ with respect to which $\tau$ is continuous, then this technique would be salvaged to prove \Cref{que:trace-question} for $(\mathcal{J},\tau)$.

Now back to spectral traces.

\begin{definition}
  \label{def:spectral-trace}
  A trace $\tau$ on an ideal $\mathcal{J}$ is said to be \emph{spectral} if for all $A \in J$, $\tau(A) = \tau(\diag(\lambda(A))$.
\end{definition}

Although \Cref{def:spectral-trace} appears innocuous at first, it has a hidden bite.
In particular, spectral traces make an implicit assumption on the ideal $\mathcal{J}$, namely, if $A \in \mathcal{J}$, then $\diag(\lambda(A)) \in \mathcal{J}$.
Unfortunately, this is not true for an arbitrary ideal $\mathcal{J}$.

Weyl \cite{Wey-1949-PNAS} established the following relationship (\Cref{lem:weyl-eigenvalue-lemma}) between a compact operator's eigenvalue sequence and its singular value sequence, contained in the next lemma.
For nonnegative, nonincreasing sequences $(a_j), (b_j)$ we say $(b_j)$ \emph{logarithmically submajorizes} $(a_j)$ and write $(a_j) \prec_{log} (b_j)$ if for all $n \in \mathbb{N}$,
\begin{equation*}
  \prod_{j=1}^n a_j \le \prod_{j=1}^n b_j.
\end{equation*}
As Sukochev and Zanin remark in \cite{SZ-2014-AM}, Weyl used the idea of logarithmic submajorization long before the order was formally defined by Ando and Hiai in \cite{AH-1994-LAA}.

\begin{lemma}[Weyl]
  \label{lem:weyl-eigenvalue-lemma}
  If $A$ is a compact operator, then its singular value sequence logarithmically submajorizes the element-wise absolute value of the eigenvalue sequence.
  Symbolically, for all $n \in \mathbb{N}$,
  \begin{equation*}
    \prod_{j=1}^n \abs{\lambda_j(A)} \le \prod_{j=1}^n s_j(A).
  \end{equation*}
\end{lemma}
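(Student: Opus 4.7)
The plan is to combine a Schur-type triangularization of $A$ on an appropriate finite-dimensional invariant subspace with the identity $\abs{\det M} = \prod_{j=1}^n s_j(M)$ for $n \times n$ matrices, together with the monotonicity of singular values under compression by a projection.

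If $\lambda_n(A) = 0$ the inequality is automatic (the left side vanishes), so I assume $\lambda_1(A), \ldots, \lambda_n(A)$ are all nonzero. Let $V$ be the (automatically direct) sum of the generalized eigenspaces of $A$ associated to the distinct eigenvalues appearing in the list $\lambda_1(A), \ldots, \lambda_n(A)$. By the finite-dimensionality proved at the end of \Cref{lem:algebraic-multiplicity}, $V$ is a finite-dimensional $A$-invariant subspace of $\Hil$ with $\dim V \ge n$. Now apply the finite-dimensional Schur decomposition to $A|_V$, choosing the order of eigenvalues on the diagonal to begin with $\lambda_1(A), \lambda_2(A), \ldots, \lambda_n(A)$; this produces an orthonormal basis $v_1, \ldots, v_N$ of $V$ in which $A|_V$ is upper triangular with the prescribed diagonal.

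Set $W := \spans\{v_1, \ldots, v_n\}$ and let $P$ be the orthogonal projection of $\Hil$ onto $W$. Upper triangularity gives $A v_k \in \spans\{v_1, \ldots, v_k\} \subseteq W$ for $k \le n$, so $W$ is $A$-invariant, $PAP|_W = A|_W$, and the matrix of $A|_W$ in the basis $v_1, \ldots, v_n$ is upper triangular with diagonal $\lambda_1(A), \ldots, \lambda_n(A)$. Hence $\abs{\det(A|_W)} = \prod_{j=1}^n \abs{\lambda_j(A)}$. On the other hand, the singular value decomposition together with the fact that unitaries have unit-modulus determinant yields $\abs{\det M} = \prod_{j=1}^n s_j(M)$ for any $n \times n$ matrix $M$. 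Applied to $A|_W$, whose singular values coincide (up to zeros) with those of $PAP$ since $PAP$ is supported on $W$, this gives
\[
  \prod_{j=1}^n \abs{\lambda_j(A)} = \prod_{j=1}^n s_j(A|_W) = \prod_{j=1}^n s_j(PAP).
\]
Finally, the minimax characterization $s_j(T) = \min\{\norm{T - F} : \rank F < j\}$ implies $s_j(PAP) \le s_j(A)$, because for any rank-$<j$ approximant $F$ of $A$ the operator $PFP$ also has rank $<j$ and $\norm{PAP - PFP} \le \norm{A - F}$.

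The main subtlety is the construction of $W$: the invariant subspace $V$ may be strictly larger than $n$-dimensional when $\lambda_n(A)$ belongs to a generalized eigenspace whose full algebraic multiplicity exceeds its count in the list $\lambda_1(A), \ldots, \lambda_n(A)$. The freedom to prescribe the order of the diagonal in the Schur form resolves this, and the freedom to reorder equal-modulus blocks in the sequence $\lambda(A)$ (which by assumption does not affect either side) lets us disregard any genuine ties in modulus at index $n$.
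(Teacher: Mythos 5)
Your proof is correct, but note that the paper itself offers no proof of this lemma at all: it is quoted as a standard fact and attributed to Weyl's 1949 paper, so there is no internal argument to compare against. What you have written is essentially Weyl's classical argument, and every step checks out: the reduction to the case $\lambda_n(A)\neq 0$, the finite-dimensional $A$-invariant subspace $V$ built from generalized eigenspaces (whose finite-dimensionality the paper does justify at the end of its proof of \Cref{lem:algebraic-multiplicity}), the Schur triangularization of $A|_V$ with a prescribed initial segment $\lambda_1(A),\dots,\lambda_n(A)$ on the diagonal, the identity $\abs{\det M}=\prod_j s_j(M)$ applied to $A|_W = PAP|_W$, and the inequality $s_j(PAP)\le s_j(A)$ via the rank-approximation characterization (or simply $s_j(PAP)\le \norm{P}\,s_j(A)\,\norm{P}$). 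Your closing remark about ties in modulus at index $n$ is actually unnecessary: the argument only needs that each distinct eigenvalue occurs among $\lambda_1(A),\dots,\lambda_n(A)$ at most as often as its algebraic multiplicity, which holds by definition of the eigenvalue sequence regardless of how equal-modulus entries are ordered. For comparison, a slightly slicker packaging of the same idea uses the $n$th antisymmetric tensor power: $\prod_{j=1}^n\abs{\lambda_j(A)}$ is the modulus of an eigenvalue of $\Lambda^n A$, hence is at most $\norm{\Lambda^n A}=\prod_{j=1}^n s_j(A)$; this avoids the explicit projection step but requires setting up exterior powers, so your compression argument is arguably the more self-contained choice in the context of this paper.
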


There is a converse to Weyl's lemma which states that for any pair of nonnegative sequences converging to zero one of which logarithmically submajorizes the other, there is a compact operator with those sequences as its eigenvalue sequence and singular value sequence.
In light of \Cref{lem:weyl-eigenvalue-lemma} and its converse, the next definition encapsulates precisely those ideals for which $A \in \mathcal{J}$ implies $\diag(\lambda(A)) \in \mathcal{J}$.

\begin{definition}
  An ideal $\mathcal{J}$ is said to be \emph{logarithmically closed} if whenever $B \in \mathcal{J}$ and $s(A)$ is logarithmically submajorized by $s(B)$, then $A \in \mathcal{J}$.
\end{definition}

Sukochev and Zanin \cite{SZ-2014-AM} recently completed a \emph{tour de force} on the subject of spectral traces with the following key result.

\begin{theorem}[Sukochev--Zanin]
  \label{thm:which-traces-spectral}
  Let $\mathcal{J}$ be an arbitrary ideal with trace $\tau$.
  \begin{itemize}
  \item If $\mathcal{J}$ is logarithmically closed, every trace on $\mathcal{J}$ is spectral.
  \item If $\tau$ is a positive trace on $\mathcal{J}$ that respects logarithmic submajorization (i.e., if $s(A) \prec_{log} s(B)$, then $\tau(A) \le \tau(B)$), then $\tau$ extends to a spectral trace on the logarithmic envelope of $\mathcal{J}$ (the smallest logarithmically closed ideal (intersection of all) containing $\mathcal{J}$).
  \end{itemize}
\end{theorem}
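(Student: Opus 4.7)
I would approach both bullets through the \emph{Ringrose upper-triangular form}: every compact operator $A$ is unitarily equivalent to a sum $D + N$ where $D = \diag(\lambda(A))$ (obtained as the diagonal of $A$ relative to a maximal chain of $A$-invariant subspaces) and $N$ is quasi-nilpotent. Granted this, unitary invariance gives $\tau(A) = \tau(D) + \tau(N) = \tau(\diag(\lambda(A))) + \tau(N)$, so both statements reduce to showing $\tau(N) = 0$ for every quasi-nilpotent $N$ in the relevant ideal.

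\textbf{First bullet.} For a logarithmically closed ideal $\mathcal{J}$, I would aim to prove the sharper statement that every quasi-nilpotent $N \in \mathcal{J}$ already lies in the commutator subspace $[\mathcal{J}, B(\Hil)]$, from which $\tau(N) = 0$ is automatic for \emph{any} trace. The tool is the Dykema--Figiel--Weiss--Wodzicki characterization of $[\mathcal{J}, B(\Hil)]$ in terms of Cesaro-like averages of the singular value sequence matched against the characteristic set of $\mathcal{J}$. For a quasi-nilpotent $N$ the eigenvalue side of Weyl's inequality (\Cref{lem:weyl-eigenvalue-lemma}) is trivial since $\lambda(N) \equiv 0$, and the log-closed hypothesis provides exactly the slack needed for the averaged singular-value sequence of $N$ to satisfy the commutator membership criterion.

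\textbf{Second bullet.} I would first identify the logarithmic envelope concretely as
\[
  \mathcal{J}^{\log} = \{A \in \mathcal{K} : s(A) \prec_{log} s(B) \text{ for some } B \in \mathcal{J}\},
\]
checking that this set is an ideal, is logarithmically closed, and is contained in any log-closed ideal extending $\mathcal{J}$. Next I would define the extension by $\hat\tau(A) := \tau(\diag(\lambda(A)))$, which makes sense because Weyl's lemma together with its converse and log-closedness guarantee $\diag(\lambda(A)) \in \mathcal{J}$ whenever $A \in \mathcal{J}^{\log}$. Positivity and $\prec_{log}$-compatibility of $\tau$ are then used to verify that $\hat\tau$ is well-defined (independent of any reordering within an equimodular block of $\lambda(A)$), linear, and unitarily invariant. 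Finally, applying the first bullet to $\hat\tau$ on $\mathcal{J}^{\log}$ shows $\hat\tau$ is spectral and agrees with $\tau$ on $\mathcal{J}$.

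\textbf{Main obstacle.} The real difficulty lies in the commutator-subspace claim of the first bullet: showing that every quasi-nilpotent operator in a logarithmically closed ideal is a sum of commutators from $[\mathcal{J}, B(\Hil)]$. This is precisely where the full Dykema--Figiel--Weiss--Wodzicki machinery is genuinely needed; any attempt to circumvent it would essentially have to rebuild that calculus. The remaining steps---Ringrose, unitary invariance, the concrete description of $\mathcal{J}^{\log}$, and well-definedness of $\hat\tau$---are structural and comparatively routine.
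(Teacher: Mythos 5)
First, a point of comparison: the paper itself gives no proof of this theorem --- it is quoted from Sukochev--Zanin --- so your sketch can only be measured against the cited literature. There your overall architecture (Ringrose upper-triangular decomposition $A = D + N$ with $D$ carrying $\lambda(A)$ and $N$ quasinilpotent, reduction to showing quasinilpotents are killed by the trace, then bootstrapping the second bullet from the first) is indeed the right skeleton, and your observation that log-closedness plus \Cref{lem:weyl-eigenvalue-lemma} puts $D$, hence $N$, in $\mathcal{J}$ is correct. The gap is in the step you yourself flag as the main obstacle, and it is not merely a matter of "invoking the DFWW machinery." The Dykema--Figiel--Weiss--Wodzicki Ces\`aro-mean characterization of $[\mathcal{J},B(\Hil)]$ is an \emph{eigenvalue} criterion valid for normal operators; for general operators it fails, and the failure is witnessed by exactly the Dykema--Kalton Example~1.5 discussed in this paper: a quasinilpotent $Q$ in an ideal with a trace $\tau(Q)\neq 0$. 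If the criterion applied to quasinilpotents the way you use it, then (since $\lambda(N)\equiv 0$) \emph{every} quasinilpotent in \emph{every} ideal would lie in the commutator subspace and log-closedness would be irrelevant --- contradicting that example. So "the log-closed hypothesis provides exactly the slack needed" is not a consequence of DFWW; it is the theorem to be proved. What actually fills this hole in the literature is Kalton's and Dykema--Kalton's result that the spectral characterization of $[\mathcal{J},B(\Hil)]$ holds for \emph{geometrically stable} ideals, together with the fact that logarithmically closed ideals are geometrically stable (or Sukochev--Zanin's own argument to the same effect); your sketch substitutes a citation that does not cover non-normal operators for the genuinely hard analytic step.

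The second bullet has a more basic problem: the formula $\hat\tau(A) := \tau(\diag(\lambda(A)))$ is not defined. For $A$ in the logarithmic envelope, Weyl's inequality only places $\diag(\lambda(A))$ in the \emph{envelope}; the ideal $\mathcal{J}$ on which $\tau$ lives is precisely \emph{not} assumed logarithmically closed here (otherwise the second bullet reduces to the first), so $\diag(\lambda(A))$ need not belong to $\mathcal{J}$ and $\tau$ cannot be applied to it. The correct order of operations is the reverse of yours: first extend $\tau$ itself from $\mathcal{J}$ to the envelope, using positivity and respect for $\prec_{\log}$ --- e.g.\ by a monotone extension on the positive cone such as $\hat\tau(A) = \sup\{\tau(B) : 0 \le B \in \mathcal{J},\ s(B) \prec_{\log} s(A)\}$, verifying additivity on positives and unitary invariance --- and only then apply the first bullet to conclude the extension is spectral. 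Relatedly, your claim that linearity of $A \mapsto \tau(\diag(\lambda(A)))$ follows routinely from positivity and $\prec_{\log}$-compatibility understates a real difficulty: eigenvalue sequences are not additive, and additivity of such Lidskii-type functionals is exactly the kind of statement that again requires the commutator-subspace results (showing $\diag\lambda(A+B) - \diag\lambda(A) - \diag\lambda(B)$ lies in the commutator space of the log-closed envelope). As written, the second-bullet plan is circular where it is not undefined.
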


\Cref{thm:which-traces-spectral} in conjunction with \Cref{lem:algebraic-multiplicity} guarantees that any trace on a logarithmically closed ideal or any positive trace that respects logarithmic submajorization necessarily satisfies the trace-commutator property.
Sukochev and Zanin also provided an example of a trace which is \emph{not} spectral.
Although spectral traces automatically satisfy \Cref{que:trace-question} by Lidskii's theorem and \Cref{lem:algebraic-multiplicity}, it is not obvious to us whether or not any non-spectral traces fail to satisfy \Cref{que:trace-question}.
But we describe in \Cref{rem:places-to-look,rem:nonzero-traces} some places to look for counterexamples to \Cref{que:trace-question}.

Sukochev and Zanin refer to Dykema and Kalton \cite[Example~1.5]{DK-1998-JRAM} for an example of a non-spectral trace.
The example provided by Dykema and Kalton is a quasinilpotent operator $Q$ in an ideal $\mathcal{J}$ with trace $\tau$ for which $\tau(Q) \not= 0$.
If one could write this $Q$ as a commutator of bounded operators with at least one product (and hence both) as a commutator of bounded operators, this would provide a negative answer to \Cref{que:trace-question}.
However, this line of attack seems to us to be difficult because there are few results of this nature in the literature for positive operators, let alone quasinilpotent ones.

\begin{remark}
  \label{rem:places-to-look}
  Suppose $\tau$ is a positive trace on an ideal $\mathcal{J}$ which does not respect logarithmic submajorization.
  If there exists $A \in \mathcal{J}_{+}$ with $\tau(A) > 0$ which is a commutator of bounded operators where at least one product (and hence both) lie in $\mathcal{J}$, then $\tau$ does not satisfy \Cref{que:trace-question}.
  It was Brown, Halmos and Pearcy in \cite{BHP-1965-CJM} who originally proved that every positive compact operator is a commutator of bounded operators.
  However, that construction does not produce products that are compact.

  It is possible to construct certain compact operators with zero kernel which are commutators of compact operators (see \Cref{thm:bpw-commutators} below).
  This was recently achieved by Patnaik \cite{Pat-2012} and Beltita--Patnaik--Weiss \cite{BPW-2014-VLOT}, but also prior by Davidson, Marcoux and Radjavi in unpublished form, thereby solving a 40-year old problem.
  Moreover, information regarding the ideals can be extracted.
  That is, when the target positive compact operator lies in some ideal, one can specify to some degree where the products in the commutator lie.

  However, it seems unnecessarily restrictive to require that both operators in the commutator be compact.
  Perhaps there is some construction which somehow interpolates between these two so that the individual operators are not compact, but the products lie in a desired ideal with a trace.
\end{remark}

Below we present the construction of certain compact operators, including some with zero kernel, as commutators of other compact operators given in \cite{BPW-2014-VLOT}.
The hope is that this construction can either be used explicitly for the purposes of the previous remark, or as inspiration for a commutator where one of the operators is bounded and the other compact.

In the theorem below, the condition that $d_n \ge 0$ may be dropped entirely by using the principal branch of the square root function throughout the proof.

\begin{theorem}[\cite{BPW-2014-VLOT}]
  \label{thm:bpw-commutators}
  If $d_n \ge 0$ and $\frac{1}{n}\displaystyle{\sum_{j=1}^n}d_j \rightarrow 0$, then the positive compact operators with eigenvalue sequence
  \begin{equation*}
    \Big(d_1, \frac{d_2}{2}, \frac{d_2}{2}, \frac{d_3}{3}, \frac{d_3}{3}, \frac{d_3}{3}, \cdots\Big)
  \end{equation*}
  are single commutators of compact operators.
\end{theorem}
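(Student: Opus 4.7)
The plan is to realize the target operator as a single commutator of compact operators constructed from weighted partial isometries between finite-dimensional blocks. Decompose $\Hil = \bigoplus_{k \ge 1} \Hil_k$ with $\dim \Hil_k = k$ and fix orthonormal bases $\{e_{k,j}\}_{1 \le j \le k}$. Because any two positive compact operators on a separable Hilbert space with the same eigenvalue sequence are unitarily equivalent, it suffices to exhibit the block-scalar operator $A = \bigoplus_{k} \frac{d_k}{k} I_{\Hil_k}$ as $[X,Y]$ for some compact $X, Y$. Observe first that $X$ and $Y$ cannot preserve the block decomposition: if they did, each restriction $[X,Y]|_{\Hil_k}$ would be a finite-dimensional commutator, hence trace-zero, contradicting the positive trace $k \cdot \frac{d_k}{k} = d_k$. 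Thus $X$ and $Y$ must couple different blocks.

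Next, introduce for each $k$ a weighted partial isometry $V_k$ linking $\Hil_k$ and $\Hil_{k+1}$, isometric onto (say) the first $k$ basis vectors of $\Hil_{k+1}$, and take $X$ and $Y$ to be suitable linear combinations of the $V_k$ and their adjoints with weights built from $\sqrt{d_k/k}$ (the principal branch of the square root accommodates complex $d_n$ when the hypothesis $d_n \ge 0$ is dropped). A direct computation of $[X,Y]e_{k,j}$ expresses the result as an algebraic combination of the weights attached to $V_{k-1}, V_k, V_{k+1}$; setting this equal to $\frac{d_k}{k} e_{k,j}$ produces a recursion on the weights that telescopes through the partial sums $\sum_{j=1}^k d_j$.

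Compactness of $X$ and $Y$ is then verified by showing that the operator norms of their block-$k$ contributions tend to zero, so that the finite-rank truncations $\sum_{k \le N}$ converge in operator norm. Here the Ces\`aro hypothesis is decisive: the elementary bound $d_k/k \le \frac{1}{k}\sum_{j=1}^k d_j$ together with $\frac{1}{n}\sum_{j=1}^n d_j \to 0$ forces $d_k/k \to 0$, which in turn makes the weights vanish at infinity and also confirms that $A$ itself is compact.

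I expect the main obstacle to be the dimension mismatch $\dim \Hil_{k+1} > \dim \Hil_k$: the isometric embedding $\Hil_k \hookrightarrow \Hil_{k+1}$ leaves one basis vector of $\Hil_{k+1}$ outside its range, so the naive commutator produces a different scalar on this leftover vector than on the rest of the block. Reconciling this stray basis vector via a cross-block redistribution of weights — and thereby recovering block-scalar form on every $\Hil_k$ — is the central technical difficulty, and it is precisely what explains the appearance of the specific eigenvalue pattern ($d_k/k$ with multiplicity exactly $k$) in the statement rather than an arbitrary one.
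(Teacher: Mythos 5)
Your outline has the right architecture---coupling consecutive blocks of growing dimension, a telescoping through the partial sums $\sum_{j\le k}d_j$, and the Ces\`aro hypothesis controlling compactness; this is exactly the skeleton of the construction in \cite{BPW-2014-VLOT}, and your observation that block-preserving $X,Y$ are ruled out by the trace obstruction is correct. But the proposal stops precisely where the proof actually lives. Taking $X,Y$ to be scalar multiples of isometric embeddings $V_k:\Hil_k\to\Hil_{k+1}$ and their adjoints cannot work, and your own ``stray basis vector'' worry is fatal rather than merely technical: on $\Hil_k$ the term coming from going up and back ($V_k^*V_k$) is a scalar, while the term coming from going down and back ($V_{k-1}V_{k-1}^*$) is a scalar times a rank-$(k-1)$ projection, so $[X,Y]|_{\Hil_k}$ is scalar only if one of the adjacent weights vanishes. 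Resolving this forces genuinely non-isometric rectangular weighted shifts inside each link: in the paper the $n\times(n+1)$ and $(n+1)\times n$ blocks carry internal weights $\sqrt{n},\sqrt{n-1},\dots,\sqrt{1}$ (in opposite orders for the two operators $C$ and $Z$) scaled by $\sqrt{\sum_{k\le n}d_k}/n$ and $\sqrt{\sum_{k\le n}d_k}/(n+1)$, chosen so that the products are diagonal, the off-diagonal blocks of the commutator cancel, and the diagonal blocks telescope to the prescribed scalars. You assert that ``a recursion on the weights telescopes'' but never exhibit weights for which it does, and the recursion is the theorem.

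There is also an internal inconsistency in your compactness argument. If the weights were literally built from $\sqrt{d_k/k}$, as your second paragraph says, no telescoping through partial sums could occur; the weights that make the commutator come out right involve $\sqrt{\tfrac{1}{n}\sum_{k\le n}d_k}$, and these are exactly the norms of the linking blocks. Hence compactness of $X$ and $Y$ requires the full Ces\`aro condition $\tfrac{1}{n}\sum_{j\le n}d_j\to 0$, not the weaker consequence $d_k/k\to 0$ that your last paragraph invokes; the bound $d_k/k\le\tfrac1k\sum_{j\le k}d_j$ only shows the target operator $A$ is compact, not that $X$ and $Y$ are.
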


\begin{proof}[Construction]
  Consider the block tri-diagonal matrices
  \begin{equation*}
    C =
    \begin{pmatrix}
      0   & A_1                   \\
      B_1 & 0   & A_2             \\
      {}  & B_2 & 0      & \ddots \\
      {}  & {}  & \ddots & \ddots \\
    \end{pmatrix}
    \quad\text{and}\quad
    Z =
    \begin{pmatrix}
      0     & X_1                   \\
      Y_{1} & 0   & X_2             \\
      {}    & Y_2 & 0      & \ddots \\
      {}    & {}  & \ddots & \ddots \\
    \end{pmatrix}
  \end{equation*}
  where $A_n$ and $X_{n}$ are the $n\times(n+1)$ matrices of norm $\sqrt{\frac{\sum_{k=1}^n d_k}{n}}$
  \begin{equation*}
    A_n =
    \frac{\sqrt{\sum_{k=1}^n d_k}}{n}
    \begin{pmatrix}
      \sqrt n & 0                                  \\
      {}      & \sqrt {n-1} & 0                    \\
      {}      & {}          & \ddots & \ddots      \\
      {}      & {}          & {}     & \sqrt 1 & 0 \\
    \end{pmatrix}
    \quad\text{and}\quad
    X_n =
    \frac{\sqrt{\sum_{k=1}^n d_k}}{n}
    \begin{pmatrix}
      0  & \sqrt 1                        \\
      {} & 0  & \sqrt 2                   \\
      {} & {} & \ddots & \ddots & {}      \\
      {} & {} & {}     & 0      & \sqrt n \\
    \end{pmatrix}
  \end{equation*}
  while $B_n$ and $Y_n$ are the $(n+1)\times n$ matrices  of norm $\frac{\sqrt{n\sum_{k=1}^n d_k}}{n+1}$
  \begin{equation*}
    B_n =
    -\frac{\sqrt{\sum_{k=1}^n d_k}}{n+1}
    \begin{pmatrix}
      0                                        \\
      \sqrt 1 & 0           &                  \\
      {}      & \sqrt 2     & \ddots & {}      \\
      {}      & {}          & \ddots & 0       \\
      {}      & {}          & {}     & \sqrt n \\
    \end{pmatrix}\,
    \quad\text{and}\quad
    Y_n =
    \frac{\sqrt{\sum_{k=1}^n d_k}}{n+1}
    \begin{pmatrix}
      \sqrt n & {}          & {}     & {}      \\
      0       & \sqrt {n-1} & {}     & {}      \\
      {}      & 0           & \ddots & {}      \\
      {}      & {}          & \ddots & \sqrt 1 \\
      {}      & {}          & {}     & 0       \\
    \end{pmatrix}.
  \end{equation*}
  Then
  \begin{equation*}
    [C, Z] =
    \begin{pmatrix}
      D_1    & 0      & U_1    & 0      & \cdots                                  \\
      0      & D_2    & 0      & U_2    & 0       & \cdots                        \\
      L_1    & 0      & D_3    & 0      & U_3     & 0 \cdots                      \\
      \vdots & \ddots & \ddots & \ddots & \ddots  & \ddots & \ddots               \\
      0      & \cdots & L_n    & 0      & D_{n+1} & 0      & U_{n+1} & 0 & \cdots \\
      \vdots                                                                      \\
    \end{pmatrix}
  \end{equation*}
  where $L$'s, $D$'s  and $U$'s are
  \begin{equation*}
    \begin{matrix}
      L                     & D                                                   & U                       \\
      B_2Y_1-Y_2B_1         & A_1 Y_1-X_1 B_1 = d_1                                 & A_1 X_2-X_1 A_2         \\
      B_3Y_2-Y_3B_2         & B_1 X_1-Y_1 A_1 + A_2 Y_2-X_2 B_2                   & A_2 X_3-X_2 A_3         \\
      \vdots                & \vdots                                              & \vdots           \\
      B_{n+1}Y_n-Y_{n+1}B_n & B_n X_n-Y_n A_n + A_{n+1}Y_{n+1}-X_{n+1}B_{n+1}     & A_n X_{n+1}-X_n A_{n+1} \\
      0 & -\left( \frac{\sum_1^n d_k}{n} \right) I_{n+1}  + \left( \frac{\sum_1^{n+1} d_k}{n} \right) I_{n+1} & 0\\
    \end{matrix}
  \end{equation*}
  Straightforward (albeit tedious) calculations ensure the stated relationships.
\end{proof}

The significance of the above theorem is manifest: if one can find a trace $\tau$ on the ideal generated by $CZ, ZC$ for which $\tau(D) \not= 0$, then that would be a counterexample to \Cref{que:trace-question}.
Examination of the products $CZ, ZC$ guarantees that they are contained in the ideal generated by the positive operator with eigenvalue sequence
\begin{equation*}
  \Bigg(d_1, \frac{\sum_1^2 d_k}{2}, \frac{\sum_1^2 d_k}{2}, \frac{\sum_1^3 d_k}{3}, \frac{\sum_1^3 d_k}{3}, \frac{\sum_1^3 d_k}{3}, \cdots\Bigg).
\end{equation*}
Indeed, note that the products $CZ, ZC$ are block tri-diagonal.
Moreover, the matrices in each block are either themselves diagonal, or they are rectangular weighted shifts.
In addition, the entries in the $n$th block are dominated by $\frac{d_n}{n}$ and the $n$th block has $n$ nonzero entries.
Thus, the singular values of each block are dominated by $\frac{d_n}{n}$ repeated $n$ times.

The downside is that this is in general a strictly larger ideal than the ideal generated by $D$.
Moreover, by a result of Varga \cite{Var-1989-PAMS}, a positive trace exists on a principal ideal if and only if the sequence of singular values of the generating operator is irregular.
In light of \Cref{thm:which-traces-spectral}, another important question with what seems a nonobvious answer is whether the ideal generated by $CZ,ZC$ is logarithmically closed.

\begin{remark}
  \label{rem:nonzero-traces}
  Not all ideals support nonzero traces, the most notable example being the ideal $\mathcal{K}$ of compact operators.
  Therefore, it is also important to recognize when an ideal supports a trace.
  An ideal supports a nonzero trace if and only if the quotient $\mathcal{J} \slash [\mathcal{J},B(\Hil)] \not= 0$ if and only if $\mathcal{J} \not= \mathcal{J}_a$ \cite{DFWW-2004-AM} (see also \cite{KW-2010-JOT} and also \Cref{rem:diagonal-invariance}).
\end{remark}

\begin{remark}
  \label{rem:diagonal-invariance}
  The similar study of which ideals $\mathcal{J}$ possess \emph{diagonal invariance} (i.e., $A \in \mathcal{J}$ implies the diagonal of $A$ is also in $\mathcal{J}$ in any orthonormal basis) was solved in \cite{KW-2011-IUMJ}.
  The condition for an ideal $\mathcal{J}$ to possess diagonal invariance is that it be \emph{arithmetically mean closed} (i.e., if $\diag \eta \in \mathcal{J}$ and $\xi \prec \eta$, then $\diag \xi \in \mathcal{J}$).
  Observe $\xi \prec \eta$ if and only if their C\'esaro averages / arithmetic means $\xi_a \le \eta_a$.
  And $\mathcal{J}_a$ is the ideal generated by all $\xi_a$ for all $\xi \in c_0^{*}$ with $\diag \xi \in \mathcal{J}$.

  The converse of diagonal invariance holds in general.
  That is, for any ideal $\mathcal{J}$ and for any operator $A$, if every diagonal of $A$ lies in $\mathcal{J}$, then $A \in \mathcal{J}$.
  We provide a short proof here.
  First it is a well-known folklore result that if every diagonal of an operator converges to zero, then the operator is compact.
  So, given an operator $A$, if every diagonal of $A$ lies in $\mathcal{J}$, then $A$ is compact.
  Thus we may diagonalize the real part of $A$ and call $\lambda_n$ the sequence of eigenvalues of the real part and $d_n$ the diagonal of $A$ in the basis of eigenvectors (for the real part).
  Since the singular values of the real part of $A$ are the absolute values of $\lambda_n$, and these are dominated by the absolute value of the diagonal $d_n$, which lies in $\mathcal{J}$, then the real part of $A$ lies in $\mathcal{J}$ as well.
  A symmetric argument diagonalizing the imaginary part proves that it lies in $\mathcal{J}$, and hence so does $A$.
\end{remark}

\bibliographystyle{amsalpha}
\bibliography{references}

\providecommand{\bysame}{\leavevmode\hbox to3em{\hrulefill}\thinspace}
\providecommand{\MR}{\relax\ifhmode\unskip\space\fi MR }
% \MRhref is called by the amsart/book/proc definition of \MR.
\providecommand{\MRhref}[2]{%
  \href{http://www.ams.org/mathscinet-getitem?mr=#1}{#2}
}
\providecommand{\href}[2]{#2}
\begin{thebibliography}{DFWW04}

\bibitem[AH94]{AH-1994-LAA}
Tsuyoshi Ando and Fumio Hiai, \emph{Log majorization and complementary
  {G}olden-{T}hompson type inequalities}, Linear Algebra Appl. \textbf{197/198}
  (1994), 113--131, Second Conference of the International Linear Algebra
  Society (ILAS) (Lisbon, 1992). \MR{1275611}

\bibitem[BHP65]{BHP-1965-CJM}
Arlen {Brown}, P.R. {Halmos}, and Carl {Pearcy}, \emph{{Commutators of
  operators on Hilbert space.}}, {Can. J. Math.} \textbf{17} (1965), 695--708
  (English).

\bibitem[BPW14]{BPW-2014-VLOT}
Daniel Belti\c{t}\u{a}, Sasmita Patnaik, and Gary Weiss,
  \emph{{$B(H)$}-commutators: a historical survey {II} and recent advances on
  commutators of compact operators}, The varied landscape of operator theory,
  Theta Ser. Adv. Math., Theta, Bucharest, 2014, pp.~57--75. \MR{3409307}

\bibitem[DFWW04]{DFWW-2004-AM}
Ken Dykema, Tadeusz Figiel, Gary Weiss, and Mariusz Wodzicki, \emph{Commutator
  structure of operator ideals}, Adv. Math. \textbf{185} (2004), no.~1, 1--79.

\bibitem[DK98]{DK-1998-JRAM}
Ken Dykema and Nigel~J. Kalton, \emph{{Spectral characterization of sums of
  commutators. II.}}, J. Reine Angew. Math. \textbf{504} (1998), 127--137
  (English).

\bibitem[DS63]{DS-1963}
Nelson Dunford and Jacob~T. Schwartz, \emph{Linear operators. part ii: Spectral
  theory, self adjoint operators in hilbert space.}, New York, John Wiley \&
  Sons Ltd. Interscience Publishers, Inc., 1963 (English).

\bibitem[GK69]{GK-1969-ITTTOLNO}
Israel~C. Gohberg and Mark~Grigor'evich Kre\u{\i}n, \emph{Introduction to the
  theory of linear nonselfadjoint operators}, Translations of Mathematical
  Monographs, vol.~18, American Mathematical Society, Providence, R.I., 1969
  (English). \MR{0246142 (39 \#7447)}

\bibitem[KW10]{KW-2010-JOT}
Victor {Kaftal} and Gary {Weiss}, \emph{{Traces on operator ideals and
  arithmetic means.}}, {J. Oper. Theory} \textbf{63} (2010), no.~1, 3--46
  (English).

\bibitem[KW11]{KW-2011-IUMJ}
Victor Kaftal and Gary Weiss, \emph{Majorization and arithmetic mean ideals},
  Indiana Univ. Math. J. \textbf{60} (2011), no.~5, 1393--1424 (English).

\bibitem[Lid65]{Lid-1965-TS2AMS}
Victor~Borisovich Lidskii, \emph{Nonselfadjoint operators with a trace},
  Transl., Ser. 2, Am. Math. Soc. \textbf{47} (1965), 43--46 (English).

\bibitem[LNRR81]{LNRR-1981-JRAM}
C.~Laurie, E.~Nordgren, H.~Radjavi, and P.~Rosenthal, \emph{On
  triangularization of algebras of operators}, J. Reine Angew. Math.
  \textbf{327} (1981), 143--155. \MR{631313}

\bibitem[Pat12]{Pat-2012}
Sasmita Patnaik, \emph{Ideals and {C}ommutators of {O}perators}, Ph.D. thesis,
  University of Cincinnati, 2012, p.~70. \MR{3152500}

\bibitem[SZ14]{SZ-2014-AM}
Fedor~A. Sukochev and Dmitriy Zanin, \emph{Which traces are spectral?}, Adv.
  Math. \textbf{252} (2014), 406--428. \MR{3144235}

\bibitem[{Var}89]{Var-1989-PAMS}
J\'ozsef~V. {Varga}, \emph{{Traces on irregular ideals.}}, {Proc. Am. Math.
  Soc.} \textbf{107} (1989), no.~3, 715--723 (English).

\bibitem[Wey49]{Wey-1949-PNAS}
Hermann Weyl, \emph{Inequalities between the two kinds of eigenvalues of a
  linear transformation}, Proc. Nat. Acad. Sci. \textbf{35} (1949), 408--411.

\end{thebibliography}

\end{document}